\newtheorem{theorem}{Theorem}[section]
\newtheorem{proposition}[theorem]{Proposition}
\newtheorem{corollary}[theorem]{Corollary}
\newtheorem{lemma}[theorem]{Lemma}
\newcommand{\Aut}{\mathord{\mathrm{Aut}}}
\newcommand{\Sol}{\mathord{\mathrm{Sol}}}
\newcommand{\pullbackcorner}[1][dr]{\save*!/#1-1.2pc/#1:(-1,1)@^{|-}\restore}
\newcommand{\RR}{\mathord{\mathbb{R}}}
\newcommand{\ZZ}{\mathord{\mathbb{Z}}}
\newcommand{\NN}{\mathord{\mathbb{N}}}
\newcommand{\N}{{\mathbb N}}
\newcommand{\Z}{{\mathbb Z}}
\newcommand{\Ker}[1]{{\operatorname{Ker}{#1}}}
\newcommand{\ignore}[1]{} 
\long\def\alert#1{\parindent2em\smallskip\hbox to\hsize
{\hskip\parindent\vrule%
\vbox{\advance\hsize-2\parindent\hrule\smallskip\parindent.4\parindent%
\narrower\noindent#1\smallskip\hrule}\vrule\hfill}\smallskip\parindent0pt}
\title{SOME ANOMALOUS EXAMPLES OF LIFTING SPACES}
\author[G. Conner]{Gregory R. Conner$^1$}
\address{ 
Department of Mathematics, 
Brigham Young University, 
Provo, UT 84602, USA} 
\email{conner@mathematics.byu.edu}
\author[W. Herfort]{Wolfgang Herfort}
\address{ 
Institute for Analysis and Scientific Computation  
Technische Universit\"at Wien 
Wiedner Hauptstra\ss e 8-10/101
Vienna, Austria} 
\email{wolfgang.herfort@tuwien.ac.at}
\author[P. Pavesic]{Petar Pave\v si\'c $^2$}
\address{ 
Faculty of Mathematics and Physics  
University of Ljubljana 
Jadranska 21
Ljubljana, Slovenia} 
\email{petar.pavesic@fmf.uni-lj.si}
\thanks{$^1$ The first author is supported by Simons Foundation collaboration grant 246221 and $^2$ the third author was partially supported by the Slovenian Research Agency grant P1-02920101.}
\dedicatory{To the Memory of Sibe Marde\v si\'c}
\subjclass[2010]{54B25,54B35,54H20,54C}
\keywords{fibrations, topological dynamics, Morse-Thue system, inverse spectra}
\begin{document}
\begin{abstract}
An inverse limit of a sequence of covering spaces over a given space $X$ is not, in general, a covering space over $X$
but is still a \emph{lifting space}, i.e. a Hurewicz fibration with unique path lifting property. Of particular interest
are inverse limits of finite coverings (resp. finite regular coverings), which yield fibrations whose fiber is homeomorphic
to the Cantor set (resp. profinite topological group). To illustrate the breadth of the theory,  we present  in this note some curious 
examples of lifting spaces that cannot be obtained as inverse limits of covering spaces.
\end{abstract}
\maketitle

\section{Introduction}

For any continuous map $p\colon L\to X$ let $p_*\colon L^I\to X^I$ be the induced map between the respective
spaces of continuous paths: $p_*\colon(\alpha\colon I\to L)\mapsto (p\circ \alpha\colon I\to X)$.

A map $p\colon L\to X$ is said to be a \emph{lifting projection} if the following diagram
$$\xymatrix{
L^I \ar[d]_{p_*} \ar[r]^{{\rm ev}_0} & L \ar[d]^p\\
X^I \ar[r]_{{\rm ev}_0} & X}
$$
is a pull-back in the category of topological spaces. In other words, there is a natural continuous one-to-one correspondence 
between paths in $L$, and  pairs $(\alpha,l)$, where $\alpha$ is a path in $X$ and $l\in L$ with $p(l)=\alpha(0)$. In fact, 
by \cite[Theorem 1.1]{Pavesic-Piccinini} a lifting projection $p\colon L\to X$ is automatically a Hurewicz fibration. Moreover, 
given a path $\alpha\colon I\to X$ and a point $l\in L$, such that $p(l)=\alpha(0)$ there exists a unique path 
$\widetilde \alpha\colon I\to L$, such that $\widetilde \alpha(0)=l$ and $p\circ\widetilde\alpha=\alpha$. 
In particular, every covering projection $p\colon \widetilde X\to X$ is a lifting projection. 

A \emph{lifting space} is a triple $(L,p,X)$ where 
$p\colon L\to X$ is a lifting projection. As usual, $X$ is the \emph{base} and $L$ is the \emph{total space}
of the fibration, but we will occasionally abuse the terminology and refer to the space $L$ 
as a lifting space over $X$. For every $x\in X$ the preimage $p^{-1}(x)\subset L$ is the \emph{fibre} of $p$ over $x$. 
If $X$ is path-connected, then all fibres are homeomorphic, so we can speak about \emph{the} fibre of $p$. 

In his classical textbook on algebraic topology E. Spanier  \cite[Chapter II]{Spanier} develops most of the theory of covering spaces 
within the general framework of lifting spaces (which he calls \emph{fibrations with unique path-lifting property}). 
As we already mentioned, covering spaces are prime examples of lifting spaces. Conversely, if the base space $X$ is sufficiently ``nice'' 
(i.e. locally path-connected and semi-locally simply connected), then the 
coverings over $X$ are exactly the lifting spaces over $X$ with a path-connected and locally path-connected total space
(\cite[Theorem 2.4.10]{Spanier}).
However, if we allow more general base spaces or total spaces that are connected but non necessarily path-connected, then a whole
new world arises. 

Indeed, many authors have studied unusual and pathological examples of covering spaces and tried to buid a suitable general theory. 
One should mention in particular Fox's theory of overlays \cite{Fox}, an interesting generalization of the concept of coverings by 
Fischer and Zastrow \cite{F-Z}, and a theory of coverings specially geared toward locally path-connected spaces by Brodsky et al. \cite{BDLM}.
See also Dydak's short note \cite{Dydak note} that was written very much in the spirit of the present article. However, our approach is 
different in that we do not attempt to simply extend the concept of a covering space to more general bases but rather we pursue the theory 
of lifting spaces in the sense of Spanier. It includes covering spaces as a special case, but it has a richer structure even when the base 
is very simple, like a circle or a cell-complex. 

Surprisingly, it turns out that despite the generality, the theory of lifting spaces has many pleasant properties that are not shared
by covering spaces. Most notably, lifting spaces are preserved by arbitrary products,
compositions and inverse limits, which is not the case for covering spaces. While a general exposition on lifting spaces can 
be found in \cite{Conner-Pavesic} and a classification of compact lifting spaces is in preparation, in this note we describe
some curious examples of lifting spaces that somehow defy classification and reveal interesting connections
with dynamical systems and profinite groups.

\section{Examples of lifting spaces}

As explained in the Introduction, in order to find lifting spaces that are not already covering spaces we must relax some 
of the usual assumptions either about the base or about the total space. We will mostly work with very simple base spaces (the circle and 
the figure eight-space) and concentrate on the intricacies of the topology of the total space. In particular, we will assume that
the total spaces are connected, but are not necessarily path-connected or locally path-connected. Only in our final example we will describe 
an interesting lifting space over the Hawaiian earring.

It is actually not difficult to construct lifting spaces that are not covering spaces as we 
have the following characterization.
\begin{proposition}(\cite[Theorem II.2.5]{Spanier})
A map $p\colon L\to X$ is a lifting space if, and only if it is a Hurewicz fibration with totally path-disconnected fibre.
\end{proposition}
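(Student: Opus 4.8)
The plan is to reduce the statement to a single equivalence for Hurewicz fibrations. By the facts recalled in the Introduction, a lifting projection is automatically a Hurewicz fibration with the unique path-lifting property, and conversely a Hurewicz fibration with unique path lifting satisfies the pull-back condition (this I indicate at the end). So the real content is: for a Hurewicz fibration $p\colon L\to X$, unique path lifting holds if and only if every fibre $p^{-1}(x)$ is totally path-disconnected — equivalently, admits no non-constant path, since the image of a path lies in a single path-component. One implication is immediate: if $p$ has unique path lifting and $\gamma\colon I\to p^{-1}(x)$ is a path in a fibre, then $p\gamma$ is the constant path at $x$, and so is $p$ of the constant path at $\gamma(0)$; both are lifts of that constant path starting at $\gamma(0)$, so by uniqueness $\gamma$ is constant.

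For the converse I would assume that $p$ is a Hurewicz fibration whose fibres are totally path-disconnected and derive unique path lifting. Let $\omega,\omega'\colon I\to L$ be lifts of a path $\alpha$ with $\omega(0)=\omega'(0)$. The key manoeuvre is to splice the two lifts at their \emph{common initial point}: let $\delta$ be the path obtained by running $\omega$ backwards from $\omega(1)$ to $\omega(0)$ and then $\omega'$ forwards from $\omega'(0)$ to $\omega'(1)$ — a well-defined path in $L$ precisely because $\omega(0)=\omega'(0)$ — which projects to the loop $\bar\alpha\ast\alpha$ based at $\alpha(1)$. Fix an explicit null-homotopy rel endpoints $G\colon I\times I\to X$ of $\bar\alpha\ast\alpha$, so that $G(\cdot,0)=\bar\alpha\ast\alpha$ while $G(\cdot,1)$, $G(0,\cdot)$ and $G(1,\cdot)$ are all constant at $\alpha(1)$. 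Using the homotopy-lifting property of $p$ (with respect to the interval) together with the lift $\delta$ of the bottom edge, extend $\delta$ to a lift $\widetilde G\colon I\times I\to L$ of $G$. The restrictions of $\widetilde G$ to the top, left and right edges of the square are paths in the single fibre $p^{-1}(\alpha(1))$, hence constant by hypothesis; the left edge is then constantly $\delta(0)=\omega(1)$, the right edge constantly $\delta(1)=\omega'(1)$, and the top edge joins the top of the left edge to the top of the right edge while being constant, forcing $\omega(1)=\omega'(1)$. Applying this to $\alpha$, $\omega$, $\omega'$ restricted to each initial subinterval $[0,\tau]$ (reparametrised to $I$) gives $\omega(\tau)=\omega'(\tau)$ for every $\tau$, so $\omega=\omega'$.

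To close the loop I would recall why a Hurewicz fibration $p$ with unique path lifting is a lifting projection: the natural continuous bijection $L^I\to X^I\times_X L$, $\gamma\mapsto(p\gamma,\gamma(0))$, is onto because paths lift and one-to-one by unique path lifting, while any lifting function of the fibration supplies a continuous inverse — it must coincide with the set-theoretic inverse, precisely because of uniqueness — so the defining square is a pull-back. The genuinely delicate step is the converse implication above, and I expect it to be the main obstacle. It is worth stressing there that one cannot simply observe that $\{t\in I:\omega(t)=\omega'(t)\}$ is clopen in $I$, because the total space $L$ is not assumed Hausdorff and its diagonal need not be closed; this is exactly what makes the homotopy-lifting argument necessary, the remaining work being the bookkeeping of which edges of the lifted square land in the fibre over $\alpha(1)$.
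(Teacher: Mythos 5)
Your proof is correct and is essentially the standard argument from the source the paper cites (Spanier, Theorem II.2.5) -- the paper itself offers no proof beyond that citation: one direction by comparing a fibre path with the constant lift, the converse by splicing the two lifts at their common initial point, lifting a null-homotopy of $\bar\alpha\ast\alpha$, and using that the three edges of the lifted square lying in $p^{-1}(\alpha(1))$ must be constant, followed by the restriction-to-$[0,\tau]$ step; the closing identification of the pull-back square via a lifting function is also the standard one. No gaps.
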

Furthermore, a locally trivial projection over a paracompact space is always a Hurewicz fibration 
(see \cite[Theorem 1.6, Corollary 1.7]{Pavesic-Piccinini}). Conversely, if $p\colon L\to X$ is a lifting space, 
then the restriction of $p$ over a contractible subspace of $X$ is locally trivial. 
\begin{theorem}
Let $X$ be a locally contractible space. Then $p\colon L\to X$ is a lifting space if, and only if $p$ is locally trivial 
and the fibres of $p$ are totally path-disconnected. 
\end{theorem}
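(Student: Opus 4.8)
The plan is to prove the two implications separately, using the facts recalled just before the statement. The forward implication is essentially immediate: if $p\colon L\to X$ is a lifting space then its fibres are totally path-disconnected by the Proposition above (a non-constant path in a fibre would be a second lift of a constant path, violating unique path lifting), and for local triviality one fixes $x\in X$, uses local contractibility to obtain a contractible neighbourhood $U$ of $x$, applies the quoted statement to see that the restriction of $p$ over $U$ is locally trivial, and intersects the resulting trivialising set with an open subset of $X$ contained in $U$ to produce an honestly open neighbourhood of $x$ over which $p$ is trivial.

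The substance is the reverse implication. Assume $p$ is locally trivial with totally path-disconnected fibre $F$; we must show that the square defining a lifting projection is a pull-back, i.e. that $\Phi\colon L^I\to P:=X^I\times_X L$, $\widetilde\alpha\mapsto(p\circ\widetilde\alpha,\widetilde\alpha(0))$, is a homeomorphism. Continuity of $\Phi$ is automatic, being the restriction of $(p_*,\ev_0)$ to the subspace $P$. The governing observation is that over any trivialising open set $V$, with trivialisation $\phi\colon p^{-1}(V)\to V\times F$, every lift of a path that remains inside $V$ has constant $F$-coordinate, since the interval is path-connected and $F$ is totally path-disconnected. Hence, given a path $\alpha$ and a point $l$ with $p(l)=\alpha(0)$, one chooses by Lebesgue's lemma a subdivision $0=s_0<\dots<s_m=1$ and trivialising open sets $V_1,\dots,V_m$ with $\alpha([s_{j-1},s_j])\subseteq V_j$, and builds the unique lift $\widetilde\alpha$ with $\widetilde\alpha(0)=l$ by concatenating $t\mapsto\phi_j^{-1}(\alpha(t),f_j)$ on $[s_{j-1},s_j]$, where $f_1=\pr_F\phi_1(l)$ and $f_{j+1}=\pr_F\phi_{j+1}\phi_j^{-1}(\alpha(s_j),f_j)$; this proves at once that $\Phi$ is a bijection and that $p$ has unique path lifting.

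It remains to show that $\Phi^{-1}=\lambda\colon P\to L^I$, the lifting function, is continuous. Since $I$ is compact Hausdorff, this is equivalent to continuity of the adjoint $\widehat\lambda\colon P\times I\to L$, which I would verify locally on $P$. Fixing $(\alpha_0,l_0)\in P$, choose a subdivision and a trivialising cover $V_1,\dots,V_m$ adapted to $\alpha_0$ as above, and let $W\subseteq P$ be the open set of pairs $(\alpha,l)$ with $\alpha([s_{j-1},s_j])\subseteq V_j$ for all $j$ and $l\in p^{-1}(V_1)$; it contains $(\alpha_0,l_0)$. On $W$ the recursion above upgrades the $f_j$ to continuous maps $f_j\colon W\to F$ (composites of evaluation maps, the homeomorphisms $\phi_j^{\pm1}$, and $\pr_F$), so on each closed slab $W\times[s_{j-1},s_j]$ one has $\widehat\lambda(\alpha,l,t)=\phi_j^{-1}(\alpha(t),f_j(\alpha,l))$, which is visibly continuous; these formulas agree along the overlaps $W\times\{s_j\}$, so the pasting lemma for the finite closed cover gives continuity of $\widehat\lambda$ on $W\times I$. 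As such sets $W$ cover $P$, $\widehat\lambda$, and therefore $\lambda$, is continuous, so $\Phi$ is a homeomorphism and $p$ is a lifting projection.

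The one genuine difficulty is that $X$ is assumed only locally contractible, not paracompact, so the trivialising cover need not be numerable and one cannot simply invoke the classical uniformization theorem for Hurewicz fibrations; total path-disconnectedness of the fibre is precisely what rescues the argument, rigidifying the lifts (locally constant $F$-coordinate) and making the chart-by-chart construction of the lifting function over the finite, compact subdivision of $I$ both well defined and continuous. The only other point that needs care is keeping all trivialising sets genuinely open when deducing local triviality from the quoted statement about contractible subspaces.
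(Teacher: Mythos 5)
Your proof is correct. Note, however, that the paper offers no proof of this theorem at all: it is stated as a consequence of the two facts quoted immediately before it (restrictions of lifting spaces over contractible subspaces are trivial; locally trivial maps over \emph{paracompact} bases are Hurewicz fibrations). Your forward implication is exactly the intended one. Your reverse implication is genuinely different from, and more careful than, what the paper's surrounding remarks suggest: a locally contractible space need not be paracompact, so the trivialising cover need not be numerable and the Hurewicz uniformization theorem cited in the text does not apply directly. You correctly identify this and replace it with a direct verification that $(p_*,\mathrm{ev}_0)\colon L^I\to X^I\times_X L$ is a homeomorphism, exploiting the fact that in a chart $V\times F$ with $F$ totally path-disconnected every lifted path has constant $F$-coordinate, so the lift is forced chart-by-chart along a Lebesgue subdivision of $I$ and the resulting lifting function is continuous by the explicit local formulas and the pasting lemma. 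What this buys is twofold: it closes the paracompactness gap left implicit in the paper, and it shows that the reverse implication in fact needs no hypothesis on $X$ at all (local contractibility is used only in the forward direction, to produce the trivialising neighbourhoods). The only points requiring the care you already give them are keeping the trivialising sets honestly open in $X$ when passing from a contractible neighbourhood to an open one, and checking that the local formulas for the lifting function agree on the overlaps $W\times\{s_j\}$, which they do by the defining recursion for the $f_j$.
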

As a matter of comparison, if $X$ is locally contractible and the fibre of a lifting space $p\colon L\to X$ is discrete, then $p$ is a 
covering projection. 

\subsection{Compactified spiral}
Let us first consider lifting spaces whose base is the circle $S^1$. 
Lifting spaces whose fibre is finite are exactly the finite coverings over $S^1$ (we assume that all spaces under consideration are 
Hausdorff). There is only one infinite covering space over the circle, namely the exponential map $\exp\colon\RR\to \ZZ$ but there are
many other lifting spaces. For example, let 
$$L:=\big\{\big(\cos t,\sin t, \frac{t}{1+|t|}\big)\mid t\in\RR\big\}\cup S^1\times\{-1,1\}\subset S^1\times [-1,1],$$
and let $p\colon L\to S^1$ be the projection map. $L$ may be depicted as a compressed spiral that is ``compactified'' by one circle
at the top and one at the bottom. The projection $p$ is clearly locally trivial while the fibre is compact and totally path-disconnected
(but not discrete), therefore $p$ is a lifting projection that is not a covering projection. Nevertheless, it is very close to covering space, 
in the sense that the path-components of $L$ are covering spaces over $S^1$. Lifting spaces that split into covering space components are 
said to be \emph{decomposable}.

\subsection{Irrational slope line}
To describe our second example, choose an irrational number $a\in\RR$, define 
$$L:=\{(e^{it},e^{iat})\mid t\in\RR\}\subset S^1\times S^1,$$
and let $p\colon L\to S^1$ be the projection on the first component. $L$ is the irrational slope line that winds around the torus,
and $p$ is a lifting projection, because it is clearly locally trivial and the fibre is totally path-disconnected. The completion of $L$ is 
the entire torus, so from our viewpoint this examples belongs more to the theory of foliations. 

\subsection{Mapping torus}
Every lifting space over the circle can be obtained as a mapping torus of a suitable map. Let $F$ be any totally path-disconnected space,
and let $f\colon F\to F$ be a self-homeomorphism. Then the \emph{mapping torus} of $f$ is defined as the quotient space
$$M_f:=\frac{I\times F}{(0,x)\sim (1,f(x))},$$
and the projection to the first component induces a lifting projection $p\colon M_f\to S^1$. 
There is an obvious extension of this construction to the case when $X$ is a 1-dimensional complex (a graph). 
In fact, we may split every 1-simplex in $X$ in the middle, take the product of the resulting space with $F$, and then glue together 
the fibres over 
the cuts with respect to some choice of self-homeomorphisms of $F$. The main drawback of this 
universal construction is that it is usually very difficult to determine the properties of the resulting lifting space, e.g. whether
it is connected, path-connected, are the path-components dense, what is the fundamental group of $L$ and so on. 

\subsection{Solenoids}
From our viewpoint, the most interesting examples of lifting spaces arise as inverse limits of covering spaces (cf. 
\cite[Proposition 2.2]{Conner-Pavesic}). Let us consider the inverse system of coverings over $S^1$ obtained by iterating 
2-fold coverings:
$$\xymatrix{S^1 &  S^1 \ar[l]_2 &  S^1 \ar[l]_2 &   \cdots\ar[l]_2 }$$
The inverse limit of the sequence is the well-known \emph{dyadic solenoid} $\Sol_2$, and the resulting projection $p\colon \Sol_2\to S^1$ 
determines a lifting space whose fibre can be identified with the inverse limit of the sequence of groups 
$$\xymatrix{\ZZ/2 & & \ZZ/4 \ar[ll]_{\mod 2} & & \ZZ/8 \ar[ll]_{\mod 4} & &  \cdots\ar[ll]_{\mod 8} }$$
which is the profinite group of 2-adic integers $\widehat\ZZ_2$. Recall that $\widehat\ZZ_2$ is a compact, totally disconnected topological
group, and that as a topological space it is homeomorphic to the Cantor set. 

Lifting spaces that are inverse limits of regular coverings have many pleasant properties: every path-component of the total space is
dense in the total space, fibres are complete topological groups (even compact groups, if all coverings in the system are finite), each point 
in the fibre uniquely determines an automorphism of the lifting space, the automorphism group acts transitively on the fibres, 
it is possible to formulate a lifting theorem, ... (cf. \cite[Sections 4.,5.]{Conner-Pavesic}). It would be therefore highly desirable
to have a criterion that could recognize inverse limits of regular coverings. For instance, we are able
to tell that our first two examples (the compactified spiral and the irrational slope line) are not inverse limits of coverings 
by direct inspection of the respective fibres. 
But what if the fibres are Cantor sets, possibly provided with a specific topological group structure, and the total space is 
constructed using the universal method described above? We are going to illustrate potential difficulties with three examples,
which are also of independent interest, because of their relations with other mathematical theories. 

\subsection{The Morse-Thue lifting space over the circle}
\label{subsec:M-T}
Our next example arises from a dynamical system related to the famous \emph{Morse-Thue sequence}. It is a binary sequence (i.e. an infinite
word in the alphabet $A:=\{0,1\}$) with some outstanding properties that can be obtained as follows. Start from the word $01$ and then 
recursively replace every $0$ in the string by $01$ 
and every $1$ by $10$.  Alternatively, we may start from the string $0$ and then, 
from every $a_n$ construct $a_{n+1}$ by concatenating $a_n$ with $a_n'$ where $a_n'$ is obtained by swapping zeroes and ones in $a_n$.
Either way, the iteration of the process yields an infinite sequence whose initial portion is 
$$\mathbf{01101001~10010110~10010110~ 01101001}\ldots, $$
and which has interesting applications in dynamical systems, game theory, fractals, combinatorics and elsewhere. 

By concatenating the Morse-Thue sequence with its reverse we obtain a doubly infinite (i.e. $\ZZ$-indexed) sequence $\omega_0\in A^\Z$. 
Denote by $M_0\subset A^\Z$ the topological closure of the orbit of $\omega_0$ with respect to the shift operator $\sigma$ on $A^\Z$. 
Clearly, $M_0$ is homeomorphic to the Cantor set, and $\sigma\colon M_0\to M_0$ is a self-homeomorphism.

The \emph{Morse-Thue lifting space} is defined as the projection $p\colon L\to S^1$, where $L$ is the mapping torus of the action of $\sigma$ on $M_0$. 
Its fibres are compact and totally disconnected, and at first sight it resembles the previously described solenoid lifting space. 
However, that impression is false: the Morse-Thue lifting space cannot be obtained as an inverse limit of finite coverings of the circle
(see Theorem \ref{t:MS-fibration}).

\subsection{The 2-3 solenoid over the figure eight}
We will see later that the Morse-Thue lifting space is not an inverse limit of coverings despite its fibre is a profinite space. 
In fact, the main obstruction are the properties of the shift map on the fibre (corresponding to the parallel transport around the circle) 
that cannot result from a strict inverse limit of finite quotients of $\ZZ$. In this subsection we describe a lifting space over the wedge 
of two circles that is an inverse limit of coverings over each circle, but the transport along arbitrary paths leads to certain irregular behaviour. 

Let $\Sol_k$ denote the inverse limit of the sequence of $k$-fold coverings 
$$\xymatrix{S^1 &  S^1 \ar[l]_k &  S^1 \ar[l]_k &  \cdots\ar[l]_k },$$
so that $\Sol_k$ is a locally trivial lifting space over $S^1$ with fibre $\widehat\ZZ_k$ (i.e., a \emph{Cantor bundle}). 
In particular, we consider $\Sol_2$ and $\Sol_3$,
the 2-adic and 3-adic solenoids and the corresponding lifting projections $p\colon \Sol_2\to S^1$ and $q\colon \Sol_3\to S^1$.  
In both case the fibres are homeomorphic to the Cantor set, so we may choose a homeomorphism 
$f\colon p^{-1}(1)\to q^{-1}(1)$ and construct the amalgamated union 
$$\Sol_2\cup_f\Sol_3=\frac{\Sol_2\coprod\ \Sol_3}{x\sim f(x)}.$$
There is an obvious projection $r\colon \Sol_2\cup_f\Sol_3\to S^1\vee S^1$ from the \emph{2-3 solenoid} to the wedge of two 
circles (\emph{figure eight}).

Since $f$ is a homeomorphism, it is easy to show that restriction of $r$ to a small neighbourhood of the wedge point in $S^1\vee S^1$ 
is trivial. As it is clearly trivial over the neighbourhoods of other points, we conclude that $r$ is locally trivial, hence a fibration, 
by the Hurewicz uniformization theorem \cite[Theorem 1.6]{Pavesic-Piccinini}. 
It follows that $r$ is a lifting projection. Moreover, $r$ is indecomposable in the sense that every path-component of the total
space is dense. In fact, each path-component contains at least one path component of $\Sol_2$ and of $\Sol_3$, 
and so its closure is the entire space. 

\subsection{The squaring lifting space over the Hawaiian earring}
\label{sec:squaring}
Our last example is a lifting space over the Hawaiian earring that is obtained as an inverse limit of a sequence of two-fold coverings.
Recall that the \emph{Hawaiian earring} is the space that can be represented as a union of planar circles with shrinking radii and 
passing through the origin:
$$H_0:=\bigcup_{n=1}^\infty C_n\ ,\ \ \ \text{where}\ \ \ \ C_n:=\big\{(x,y)\in\RR^2\mid x^2+y^2=\frac{x}{n} \big\}.$$
We are going to build a sequence of coverings 
$$\xymatrix{H_0 &  H_1 \ar[l]_{p_1} & H_2 \ar[l]_{p_2}& \ldots \ar[l]_{p_3}}$$
by applying at each step the following construction. Assume that the space $X$ is obtained from a path-connected space $A$ by attaching a circle at 
each point of some finite subset  $F\subset A$. 
We can conveniently identify $X$ as a subset of the product $A\times S^1$, $X=A\times\{1\}\cup F\times S^1$.
Let $\widetilde X:=A\times\{-1,1\}\cup F\times S^1\subset A\times S^1$; then a straightforward computation shows that the squaring map on $S^1$ 
induces a two-fold covering map $A\times S^1\to A\times S^1$, which in turn restricts to a two-fold covering $\widetilde X\to X$. 

To construct $H_1$ we view $H_0$ as the result of attaching the circle $C_1$ to the union of the remaining circles. Then we let $H_1:=\widetilde H_0$,
with the projection $p_1\colon H_1\to H_0$ given by the construction described in the previous paragraph. Note that $p_1^{-1}(C_2)$ consists of two
circles, so we view $H_2$ as the result of attaching those two circles to the rest of $H_1$. Then $H_2:=\widetilde H_1$, and $p_2\colon H_2\to H_1$ is 
the corresponding two-fold covering. By iterating this procedure we obtain an inverse sequence of coverings, whose limit 
$$H_\infty:=\lim_{\longleftarrow} H_n,$$
and the natural projection $p_\infty\colon H_\infty\to H_0$ determines a lifting space over the Hawaiian earring, that we call a \emph{squaring lifting space},
because at each step the the projection is essentially induced by squaring.

At first glance the squaring lifting space over the Hawaiian earring is very similar to the 2-adic solenoid $\Sol_2$. Both lifting spaces are inverse limits
of two-fold coverings induced by squaring operation; both have fibres homeomorphic to the Cantor set, endowed with the structure of 2-adic integers compatible
with the lifting projections; one can even show that the group of deck transformations acts freely and transitively on the fibres (see \cite[Section 4]{Conner-Pavesic}).
By analogy, one would expect that, as in the case of the solenoid, $H_\infty$ is not path-connected nor locally path-connected. We were thus quite surprised to discover
that $H_\infty$ is actually path-connected and even locally path-connected (see Theorem \ref{thm:HE lifting}).

\section{Properties of lifting spaces}

In this section we state and prove the main properties of the lifting space examples described above.

\subsection{The Morse-Thue lifting space}

To examine the Morse-Thue lifting space we first recall some basic concepts from topological group actions and topological dynamics (cf. \cite{deVries}). 
Given a metric space $X$ with a right action of some topological group $T$, a pair of elements 
$(x,y) \in X\times X$ is said to be {\em proximal} if $\inf_{t\in T}d(x\cdot t,y\cdot t)=0$, otherwise it is {\em distal}. 
If every pair $(x,y)$ with $y\neq x$ is distal, then $x\in X$ is a {\em distal point}.  
A space is {\em point distal} if it contains a distal point,  and it is a {\em distal space} if it contains a distal point with dense orbit.
Furthermore, a right $T$-space $X$ is {\em minimal} if $X$ does not contain any proper $T$-invariant subset, and is 
{\em equicontinuous} if the elements of $T$ form an equicontinuous family of maps on $X$.

Every self-homeomorphism $t$ of a space $X$ determines an action of the additive group $\ZZ$ on $X$. The resulting \emph{dynamical system} 
is denoted $(X,t)$. In particular, $(M_0,\sigma)$ as described in Subsection \ref{subsec:M-T} is the so called \emph{Morse-Thue minimal system}
described in \cite[(2.27), p.~181]{deVries}, and we have the following facts
:
\begin{enumerate}
\item $\ZZ$-space $M_0$ is point distal and not distal, cf. \cite[p.309]{deVries}.
\item $\ZZ$-space $M_0$ is not equicontinuous, cf. \cite[Chapter V, (6.1) 8, p,~481]{deVries}.
\item $\omega_0$ is an almost periodic point, and therefore its orbit closure is minimal, 
  \cite[(2.29) Theorem, p.~182 and (2.31) Remarks, p.~183]{deVries}.
\item There are no periodic orbits.
\end{enumerate}

To compare the Morse-Thue system with inverse limits of finite dynamical systems, let as consider an inverse sequence of spaces $(X_n,p_{n+1,n})$ 
that is  {\em strict} in the sense that all bonding maps $p_{n+1,n}:X_{n+1}\to X_n$ are onto. Then we have the following characterization.

\begin{lemma}\label{l:invlim-equicont}
The inverse limit of a strict sequence of finite $\Z$-spaces $X_n$ is equicontinuous.
\end{lemma}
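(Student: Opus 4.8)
The plan is to show that the inverse limit $X_\infty := \lim_\leftarrow X_n$ of a strict sequence of finite $\Z$-spaces carries a metric in which every element of the $\Z$-action moves points by an amount that can be controlled uniformly. The key observation is that each $X_n$, being a \emph{finite} $\Z$-space, is necessarily equicontinuous in a trivial way: a finite discrete space admits only the discrete metric (up to rescaling), so every self-map is an isometry, and the $\Z$-action on $X_n$ consists of permutations of a finite set. Thus the whole action factors through the finite symmetric group on $X_n$.

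First I would fix the standard metric on the inverse limit. Writing $q_n\colon X_\infty\to X_n$ for the canonical projections, and choosing for each $n$ the discrete $\{0,1\}$-valued metric $d_n$ on $X_n$, define $d(x,y):=\sum_{n\ge 1} 2^{-n} d_n(q_n(x),q_n(y))$; this metrizes the inverse limit topology since the $X_n$ are finite (hence compact metric) and the sequence is strict. Next I would record that the $\Z$-action on $X_\infty$ is the restriction of the product action, so that $q_n(x\cdot k)=q_n(x)\cdot k$ for all $k\in\Z$. The crucial finiteness input is then: for each $n$, the permutation $\sigma_n$ of the finite set $X_n$ induced by the generator has finite order, say $m_n$; hence the map $x\mapsto x\cdot k$ on $X_n$ depends only on $k \bmod m_n$ and, being a bijection of a discrete space, is automatically an isometry of $(X_n,d_n)$.

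From here equicontinuity is a short estimate. Given $\varepsilon>0$, pick $N$ with $2^{-N}<\varepsilon/2$. If $d(x,y)<2^{-N}$ then necessarily $q_n(x)=q_n(y)$ for all $n\le N$ (because the discrete metrics are $\{0,1\}$-valued), and since each translation by $k$ is an isometry of $(X_n,d_n)$ we get $q_n(x\cdot k)=q_n(y\cdot k)$ for all $n\le N$ and all $k\in\Z$; consequently $d(x\cdot k,y\cdot k)\le\sum_{n>N}2^{-n}=2^{-N}<\varepsilon$ for every $k$. Taking $\delta:=2^{-N}$ (independent of $x,y$ and of $k$) exhibits $\{x\mapsto x\cdot k \mid k\in\Z\}$ as an equicontinuous — in fact uniformly equicontinuous — family, which is the claim.

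I expect no serious obstacle here: the only point requiring a word of care is checking that the metric $d$ above really does induce the inverse limit topology and that the coordinatewise bound is legitimate, i.e. that $d(x\cdot k,y\cdot k)$ is genuinely controlled by agreement of the first $N$ coordinates — both follow immediately from the $\{0,1\}$-valued choice of $d_n$ and strictness of the bonding maps. The conceptual content is simply that finiteness forces the action on each stage to be by isometries of a discrete space, and equicontinuity of an inverse limit follows from uniform equicontinuity of the stages; strictness is used only to guarantee that the projections $q_n$ are onto, so that $X_\infty$ genuinely maps onto each $X_n$ and the metric is non-degenerate.
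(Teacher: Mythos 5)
Your proof is correct, but it takes a genuinely different route from the paper's. The paper argues structurally: each finite $\Z$-space $X_n$ has a finite-index kernel $K_n=\{g\in\Z\mid x\cdot g=x\ \text{for all}\ x\in X_n\}$, so the system is an inverse system of actions of the finite cyclic groups $\Z/K_n$; the limit therefore carries a continuous action of the compact procyclic group $T=\lim_{\longleftarrow}\Z/K_n$, and equicontinuity of $T$ (hence of its subgroup $\Z$) follows from the Arzel\`a-Ascoli theorem. You instead work directly with an explicit metric: taking the $\{0,1\}$-valued metric $d_n$ on each level and $d=\sum_n 2^{-n}d_n(q_n(\cdot),q_n(\cdot))$ on the limit, the coordinatewise, equivariant action preserves agreement of the first $N$ coordinates, which yields the uniform modulus $\delta=2^{-N}$ valid for all $k\in\Z$ simultaneously. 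Your route is more elementary (no compact acting group, no Ascoli) and in fact uses less than you claim to need: the finite orders $m_n$ you introduce never enter the final estimate, and the argument only requires each $X_n$ to be discrete and the action to be by bijections. What the paper's route buys is the conceptual point that the action extends to a compact procyclic group, which matches the profinite theme of the surrounding discussion. Two harmless slips in your write-up: a finite discrete space with more than two points does admit metrics other than rescalings of the discrete one (irrelevant, since you explicitly fix the $\{0,1\}$-valued metric), and strictness is not what makes $d$ nondegenerate --- $d(x,y)=0$ already forces $x=y$ in any inverse limit, surjective bonding maps or not.
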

\begin{proof}
Since each $X_n$ is finite, there is a kernel of the action, 
$$K_n:=\{g\in\Z\mid (\forall x\in X_n) \ \ x\cdot g=x\}.$$
The kernels $K_n$ form a decreasing sequence and $(X_n,\Z/K_n)$ is an inverse system of finite actions. Let $(X,T)$ be its inverse limit, 
where $T$ is a procyclic group acting continuously on $X$. 
Since $T$ is compact the  Arzel\`a-Ascoli theorem implies that $T$ and its subgroup $\Z$ act equicontinuously on $X$.
\end{proof}
As a consequence, the Morse minimal system cannot be presented as the strict inverse limit of finite dynamical systems. In particular, 
a fibration, that has $M_0$ as a fibre, cannot be obtained as an inverse limit of finite coverings.

\begin{theorem}\label{t:MS-fibration}
Let $L$ be the mapping torus of the action of $\sigma$ on $M_0$. Then $p\colon L\to S^1$ is a lifting space with 
compact and totally disconnected fibres that cannot be obtained  as the
inverse limit of finite covering spaces over the circle.
\end{theorem}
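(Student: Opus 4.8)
The plan is to combine the dynamical facts about the Morse--Thue system with the structural results already in hand. First I would verify that $p\colon L\to S^1$ is a lifting space: the fibre is $M_0$, which is homeomorphic to the Cantor set, hence compact and totally disconnected (in particular totally path-disconnected), and the mapping torus construction makes $p$ locally trivial, so $p$ is a Hurewicz fibration with totally path-disconnected fibre and the characterization quoted after the first Proposition applies. That part is routine.

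The substance is showing that $L$ is not an inverse limit of finite coverings of $S^1$. Here I would argue by contradiction. Suppose $L=\lim_{\longleftarrow}L_n$ where each $p_n\colon L_n\to S^1$ is a finite covering and the bonding maps are compatible with the projections to $S^1$. Restricting to the fibre over the basepoint $1\in S^1$, we get that $M_0=\lim_{\longleftarrow}F_n$ where $F_n=p_n^{-1}(1)$ is a finite set, and the bonding maps $F_{n+1}\to F_n$ are onto (since inverse limits of an inverse system of finite nonempty sets with the Mittag-Leffler-type/surjectivity reindexing are strict — one may pass to images to arrange surjectivity without changing the limit, as $M_0$ surjects onto each $F_n$). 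Crucially, the parallel transport around the generating loop of $S^1$ is a self-homeomorphism of each fibre, and under the identifications it is exactly the shift $\sigma$ on $M_0$; compatibility of the coverings means $\sigma$ descends to a $\ZZ$-action on each finite $F_n$, so $(M_0,\sigma)=\lim_{\longleftarrow}(F_n,\ZZ)$ is a strict inverse limit of finite $\ZZ$-spaces. By Lemma~\ref{l:invlim-equicont} this forces $(M_0,\sigma)$ to be equicontinuous, contradicting fact (2) above that the Morse--Thue $\ZZ$-space is not equicontinuous.

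The main obstacle I expect is the bookkeeping that turns ``$L=\lim_{\longleftarrow}L_n$ as lifting spaces over $S^1$'' into ``$M_0=\lim_{\longleftarrow}F_n$ as a strict inverse system of finite $\ZZ$-spaces.'' One must check three things carefully: (a) that the functor ``restrict to the fibre over $1$'' commutes with the inverse limit, which is immediate since inverse limits commute with the preimage of a point; (b) that the monodromy of each finite covering $L_n\to S^1$ is a genuine $\ZZ$-action on $F_n$ compatible across the tower — this uses that each bonding map $L_{n+1}\to L_n$ is a map over $S^1$, hence intertwines the monodromies; and (c) that one may replace the tower by a strict (surjective-bonding-map) one with the same limit, which follows because the projection $M_0\to F_n$ is surjective (the orbit of $\omega_0$ is dense in $M_0$ and its image is dense, hence all of the finite set $F_n$) so replacing $F_n$ by the image of $M_0$ changes nothing. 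Once these identifications are in place, Lemma~\ref{l:invlim-equicont} and the cited non-equicontinuity of the Morse--Thue system close the argument; no further computation is needed.
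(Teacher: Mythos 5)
Your proposal is correct and follows essentially the same route as the paper: the paper derives the theorem directly from Lemma~\ref{l:invlim-equicont} together with the non-equicontinuity of the Morse--Thue system, exactly as you do, and your extra bookkeeping (restricting to the fibre, equivariance of the bonding maps, passing to images to make the tower strict) just makes explicit what the paper leaves implicit. One tiny remark: your parenthetical claim that $M_0\to F_n$ is surjective because the orbit of $\omega_0$ is dense is circular (density of the image in the image proves nothing about the codomain), but this is harmless since your fallback --- replacing $F_n$ by the image of $M_0$ --- is the step that actually carries the argument.
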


\subsection{The 2-3 solenoid}

By its very construction, the 2-3 solenoid $\Sol_2\cup_f\Sol_3$ locally resembles an inverse limit of coverings, but we will show that its total space 
cannot be obtained in such a way. Indeed, if $r$ is an inverse limit of coverings, then there must exist a covering $u\colon \widetilde X\to S^1\vee S^1$ 
such that $r$ factors over $u$ as in the diagram

$$\xymatrix{
\Sol_2\cup_f\Sol_3 \ar[dd]_r \ar[dr]^s\\
& \widetilde X \ar[dl]^u\\
S^1\vee S^1}$$

As $\Sol_2$ is connected, the restriction $u|\colon s(\Sol_2)\to S^1$ is a connected covering over $S^1$ whose degree then must be a power of 2. 
Similarly, the restriction $u|\colon s(\Sol_3)\to S^1$ is a connected covering whose degree is a power of 3. As both 
degrees coincide with the degree of $u$, it follows that $u$ can only be the trivial covering of $S^1\vee S^1$. 

\begin{proposition}
$\Sol_2\cup_f\Sol_3$ cannot be mapped over $S^1\vee S^1$ to any non-trivial covering space over $S^1\vee S^1$. As a consequence, 
$r\colon \Sol_2\cup_f\Sol_3\to S^1\vee S^1$ cannot be obtained as an inverse limit of coverings of $S^1\vee S^1$.
\end{proposition}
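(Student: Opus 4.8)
The plan is to establish the proposition in two stages, following the outline already sketched in the paragraph preceding the statement. The first stage proves that $\Sol_2\cup_f\Sol_3$ admits no map over $S^1\vee S^1$ to a nontrivial covering; the second stage derives the non-realizability as an inverse limit of coverings as a formal consequence.

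For the first stage, suppose $u\colon\widetilde X\to S^1\vee S^1$ is a covering and $s\colon\Sol_2\cup_f\Sol_3\to\widetilde X$ is a map with $u\circ s=r$. First I would recall that a connected covering of $S^1\vee S^1$ restricts, over each of the two circles, to a covering of $S^1$, and that the degree of such a covering (the cardinality of a fibre) is the same over every point; in particular the degrees over the two circles agree and equal the global degree $\deg u$. Next I would examine $s(\Sol_2)$: since $\Sol_2$ is connected, $s(\Sol_2)$ lies in a single path-component — more precisely in the preimage $u^{-1}(S^1_{(2)})$ of the first circle, and its image in $\widetilde X$ lies in one connected covering of $S^1_{(2)}$. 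The key point is that the composite $\Sol_2\to S^1_{(2)}$ already \emph{is} the $2$-adic solenoid projection $p$, which factors through the $n$-fold covering $S^1\to S^1$ for every power $2^n$ but through no covering of degree divisible by an odd prime; hence the connected covering of $S^1_{(2)}$ that receives $s(\Sol_2)$ must have degree a power of $2$. Symmetrically, using $\Sol_3$ and the odd-degree coverings, the relevant covering of the second circle has degree a power of $3$. Since both of these numbers equal $\deg u$, we get $\deg u = 2^a = 3^b$, forcing $\deg u = 1$, so $u$ is trivial.

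The careful point in the previous paragraph — the one I expect to be the main obstacle to write cleanly — is the claim that the solenoid projection $p\colon\Sol_2\to S^1$ does not factor through any covering of $S^1$ of degree not a power of $2$. To justify it, I would use that $\Sol_2$ is connected and that any factorization of $p$ through an $m$-fold connected covering $\exp_m\colon S^1\to S^1$ would, on fundamental groups (or rather on the image of the map induced by $p$ on path-components/loops), exhibit the "winding data" of $\Sol_2$ as contained in the subgroup $m\ZZ\subseteq\ZZ$. Concretely, $p_*$ of a loop traversing $\Sol_2$ once around is the generator $1\in\pi_1(S^1)=\ZZ$ up to the inverse-limit bookkeeping, and the set of degrees through which $p$ factors is exactly $\{2^n : n\geq 0\}$, which follows from the explicit description of $\Sol_2$ as $\lim_{\longleftarrow}(S^1\xleftarrow{2}S^1\xleftarrow{2}\cdots)$. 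One has to be slightly delicate because $\Sol_2$ is not path-connected, so "$\pi_1$" must be replaced by the monodromy/lifting criterion for the fibration $p$; but since coverings are themselves lifting spaces and compositions of lifting projections are lifting projections, a factorization $p = \exp_m\circ p'$ with $p'$ a lifting projection is controlled by the unique path lifting property, and the divisibility obstruction goes through.

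For the second stage, suppose $r$ were an inverse limit of coverings $r = \lim_{\longleftarrow} u_i$ with $u_i\colon \widetilde X_i\to S^1\vee S^1$ coverings and compatible maps $s_i\colon \Sol_2\cup_f\Sol_3\to\widetilde X_i$ satisfying $u_i\circ s_i = r$. Then each $u_i$ is, by the first stage, the trivial covering, i.e.\ $\widetilde X_i$ is a disjoint union of copies of $S^1\vee S^1$ with $u_i$ the fold map. Since the total space $\Sol_2\cup_f\Sol_3$ is connected, each $s_i$ lands in a single copy, so $s_i$ is just a section-like identification and $\lim_{\longleftarrow}\widetilde X_i$ is again (a disjoint union of copies of) $S^1\vee S^1$ mapping by the identity to $S^1\vee S^1$; in particular its fibre is a point, whereas the fibre of $r$ is a Cantor set (it contains the fibres of $\Sol_2$ and $\Sol_3$). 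This contradiction completes the proof. I would phrase this last step using the general fact, cited earlier, that an inverse limit of coverings is a lifting space whose fibre is the inverse limit of the (discrete) fibres, so a nondegenerate fibre forces some bonding stage to be a nontrivial covering, which the first stage has excluded.
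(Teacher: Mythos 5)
Your argument is correct and follows essentially the same route as the paper: factor $r$ through a putative covering $u$, use connectedness of $\Sol_2$ and $\Sol_3$ to force the degree of the relevant covering of each circle to be simultaneously a power of $2$ and a power of $3$, hence $1$, and then rule out the inverse-limit presentation by comparing fibres. You in fact supply more detail than the paper does, both on the key divisibility claim (that $p\colon\Sol_2\to S^1$ factors only through coverings of $2$-power degree, via the monodromy on the fibre) and on the final passage from ``every stage is trivial'' to the contradiction with the Cantor-set fibre of $r$.
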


Let us discuss another peculiar property of the 2-3 solenoid. Given a lifting space $p\colon L\to X$,
the \emph{group of deck transformations} $A(p)$ consists of all self-homeomorphisms $h\colon L\to L$ that commute with $p$, as in
$$\xymatrix{
L\ar[rr]^h \ar[dr]_p& & L\ar[dl]^p\\
& X}$$
We have already mentioned that the group of deck transformations of an inverse limit of regular coverings is in one-to-one
correspondence with the points in the fibre, just like in the case of regular coverings. And like for coverings, a small
group of deck transformations indicates that a lifting space is highly irregular. \\

\begin{theorem}
The group of deck transformations of the 2-3 solenoid is trivial.
\end{theorem}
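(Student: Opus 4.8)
The plan is to show that any deck transformation $h\colon\Sol_2\cup_f\Sol_3\to\Sol_2\cup_f\Sol_3$ must be the identity by analysing what $h$ does to the two ``petals'' $\Sol_2$ and $\Sol_3$ and then what it does on each of them separately. First I would observe that since $h$ is a homeomorphism commuting with $r$, it carries the preimage $r^{-1}$ of (a neighbourhood of) the wedge point to itself, and in particular it permutes the path-components of the total space and respects the decomposition coming from the two petals. The key local fact is that a small punctured neighbourhood of the wedge point in $S^1\vee S^1$ has two ``ends'', one going into each circle; the local triviality of $r$ established in the construction means $r^{-1}$ of such a neighbourhood splits accordingly, and the $\Sol_2$-side is distinguished from the $\Sol_3$-side by the degrees of the restricted projections (powers of $2$ versus powers of $3$), exactly as in the Proposition above. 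Hence $h(\Sol_2)=\Sol_2$ and $h(\Sol_3)=\Sol_3$; in other words $h$ restricts to a deck transformation $h_2$ of $p\colon\Sol_2\to S^1$ and a deck transformation $h_3$ of $q\colon\Sol_3\to S^1$, and these must agree on the common fibre $p^{-1}(1)=q^{-1}(1)$ identified via $f$.

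Next I would use the known description of deck transformations of an inverse limit of regular coverings (cited from \cite[Section 4]{Conner-Pavesic}): the deck group of $\Sol_k\to S^1$ is $\widehat\ZZ_k$ acting by translation on the fibre, so $h_2$ is translation by some $a\in\widehat\ZZ_2$ and $h_3$ is translation by some $b\in\widehat\ZZ_3$. The compatibility condition is that, under the homeomorphism $f\colon p^{-1}(1)\to q^{-1}(1)$, translation by $a$ on $\widehat\ZZ_2$ corresponds to translation by $b$ on $\widehat\ZZ_3$; that is, $f$ conjugates the map $x\mapsto x+a$ to the map $y\mapsto y+b$. The heart of the argument is to show this forces $a=0$ and $b=0$. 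For this I would look at the cycle structure / dynamics of these translations: translation by a nonzero element $a\in\widehat\ZZ_2$ has all orbits infinite unless $a=0$, and the same for $\widehat\ZZ_3$; more refined invariants come from the way the translation interacts with the canonical filtration by the clopen subgroups $2^n\widehat\ZZ_2$ versus $3^n\widehat\ZZ_3$. Concretely, a conjugacy $f$ between $+a$ on $\widehat\ZZ_2$ and $+b$ on $\widehat\ZZ_3$ would have to match up, for every $n$, the partition of $\widehat\ZZ_2$ into the orbits of $+a$ modulo $2^n$ with the partition of $\widehat\ZZ_3$ into orbits of $+b$ modulo $3^m$ for a suitable $m$; but the block sizes on the $2$-adic side are powers of $2$ and on the $3$-adic side powers of $3$, so unless both partitions are trivial (all blocks singletons, i.e. $a$ and $b$ are trivial) they cannot be matched. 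This yields $a=b=0$, so $h_2$ and $h_3$ are the identity, hence $h=\Id$.

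The main obstacle, I expect, is the step showing that $h$ must preserve each petal rather than, say, permuting path-components in some way that mixes the two solenoids. One has to rule this out carefully: the potential worry is that a path-component of $\Sol_2$ could be homeomorphically carried onto part of $\Sol_3$, which would not a priori contradict anything global. The resolution is genuinely local and uses that $r$ is locally trivial near the wedge point together with the arithmetic obstruction on covering degrees: in a deleted neighbourhood of the wedge point the ``$2$-adic'' leaves and the ``$3$-adic'' leaves are topologically distinguishable as covering spaces of the punctured arc (their degrees being powers of $2$ resp.\ $3$), and $h$, commuting with $r$, must respect this. Once the petal-preservation is in hand, the remainder is the clean $p$-adic dynamics argument sketched above. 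I would also remark, as the theorem's punchline, that this stands in stark contrast with honest inverse limits of regular coverings, where the deck group acts transitively on the fibre and is therefore as large as possible; the $2$--$3$ solenoid has a Cantor-set fibre carrying a perfectly good profinite group structure compatible with $r$, yet admits no nontrivial deck transformation at all.
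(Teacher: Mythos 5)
Your overall strategy matches the paper's: restrict $h$ to the two petals, identify each restriction with a fibre translation by some $a\in\widehat\ZZ_2$ resp.\ $b\in\widehat\ZZ_3$, and show that the intertwining relation forced by $f$ makes $a=b=0$. Two remarks on the details, one where you do too much and one where you do too little.

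The paragraph you spend establishing $h(\Sol_2)=\Sol_2$ is unnecessary, and the mechanism you invoke (local triviality near the wedge point plus the degree arithmetic) is not where the content lies. A deck transformation satisfies $r\circ h=r$, so it preserves every fibre and hence $r^{-1}(A)$ for every subset $A$ of the base; since $\Sol_2=r^{-1}(C_1)$ and $\Sol_3=r^{-1}(C_2)$ for the two circles $C_1,C_2$ of the wedge, petal preservation is automatic and no mixing of path-components across petals can occur. The paper dismisses this with the word ``clearly'', and rightly so.

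The heart of your argument --- that no homeomorphism $f\colon\widehat\ZZ_2\to\widehat\ZZ_3$ can conjugate $x\mapsto x+_2a$ to $y\mapsto y+_3b$ unless $a=b=0$ --- is correct in spirit but not proved as written. A conjugacy is under no obligation to ``match up'' the canonical coset partitions mod $2^n$ with the canonical coset partitions mod $3^m$; it only carries a clopen partition cyclically permuted by $+_2a$ to \emph{some} clopen partition cyclically permuted by $+_3b$. To close the gap you need the (easy but necessary) lemma that any clopen partition of $\widehat\ZZ_3$ cyclically permuted by $+_3b$ has a power of $3$, in particular an odd, number of blocks: each block is a finite union of cosets of some $3^m\widehat\ZZ_3$, and translation by $b$ acts on $\ZZ/3^m$ with all orbits of odd length, so no $2$-colouring can be swapped by it. On the other side, if $a=2^ju\neq0$ with $u$ a unit, then sorting $\widehat\ZZ_2$ by the $j$-th binary digit gives a clopen partition into exactly two blocks interchanged by $+_2a$. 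Transporting it through $f$ gives the contradiction. With that lemma in place your route is complete and is a genuinely different, more dynamical argument than the paper's. For comparison, the paper finishes by pure computation: setting $\bar f(x)=f(x)-_3f(0)$ one derives $\bar f(n\cdot_2a)=n\cdot_3\bar f(a)$ for all $n\in\ZZ$; taking $n=2^i$, the left side tends to $\bar f(0)=0$ by continuity of $\bar f$, while $2$ is a unit in $\widehat\ZZ_3$, so $2^i\cdot_3\bar f(a)$ keeps constant $3$-adic valuation and cannot tend to $0$ unless $\bar f(a)=0$; this forces $b=0$ and then $a=0$ by injectivity of $f$. Both arguments ultimately exploit the same arithmetic incompatibility of the $2$-adic and $3$-adic filtrations, but the paper's version avoids any discussion of conjugacy invariants. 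Your closing remark contrasting this rigidity with the transitive deck group of a genuine inverse limit of regular coverings is exactly the point the paper intends.
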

\begin{proof}
Let $h\colon \Sol_2\cup_f\Sol_3\to \Sol_2\cup_f\Sol_3$ be a deck transformation of $r$. Clearly, the restriction $h|_{\Sol_2}$ is a deck
transformation of $p\colon\Sol_2\to S^1$. Since $p$ is regular, $h$ is completely determined by the image of one point,
say $x_0\in \Sol_2$. As a consequence, $h$ can be written as a translation
$$h(x)=x+_2(h(x_0)-_2 x_0),$$
where $+_2$ and $-_2$ denote 2-adic addition and subtraction. Similarly, $h|_{\Sol_3}$ is a translation with respect to 
the 3-adic addition. Since the two operations must coincide along the common fibre, they are related through the homeomorphism
$f\colon\widetilde\ZZ_2\to\widetilde\ZZ_3$ by the following formula
$$ h(f(x))=f(x)+_3\big(f(h(x_0))-_3f(x_0)\big).$$
If we define $a:=h(x_0)-_2x_0$ and $b:=f(h(x_0))-_3f(x_0)$ and take into account the relation $h(f(x))=f(h(x))$, 
we obtain that for every $x\in\widehat\ZZ_2$
$$f(x+_2a)=f(h(x))=h(f(x))=f(x)+_3b,$$ 
so in particular
$$b=f(a)-_3 f(0).$$
Let $\bar f(x):=f(x)-f(0)$, so that we get
$$\bar f(x+_2 a)=f(x+_2a)-_3f(a)=f(x)+_3f(a)-_3f(0)-_3 f(0)=\bar f(x)+_3\bar f(a).$$
By induction we obtain the following equality $\bar f(n\cdot_2 a)=n\cdot_3\bar f(a)$. 
However, $2^i\cdot_2 a$ is a Cauchy sequence in $\widehat\ZZ_2$ (converging to 0), while
$2^k\cdot_3\bar f(a)$ is not a Cauchy sequence in $\widehat\ZZ_3$, unless $\bar f(a)=0$, which would imply $b=0$ and $h=\mathrm{Id}$.
we conclude that the lifting space $r\colon \Sol_2\cup_f\Sol_3\to S^1$ is \emph{rigid} in the sense that $A(r)$ is trivial.
\end{proof}

\subsection{The squaring lifting space}

Intuitively, a lifting space obtained as an inverse limit of a sequence of coverings should resemble a solenoid so in particular, 
we may expect that its total space is neither path-connected nor locally path-connected. However, our intuition is wrong, as we will
explain by analysing the squaring lifting space over the Hawaiian earring. We begin with an alternative description of the spaces $H_n$
(notation is from Section \ref{sec:squaring}).

\begin{lemma}
The covering space $q_n=p_n\circ\ldots\circ p_1\colon H_n\to H_0$ can be identified as the pull-back of the $n$-th power squaring map as in the diagram
$$\xymatrix{
H_n \ar[rr] \ar[d]_{p_n}\pullbackcorner & & (S^1)^n\ar[d]^{2^n} \\
H_0 \ar@{^(->}[r]_-i & (S^1)^\N \, \ar[r]_-{\mathrm{pr_n}} & (S^1)^n}$$
where the bottom map is the composition of the inclusion $i$ of $H_0$ into the countable product of circles with the projection to the product of $n$ circles, 
and $2^n$ denotes the product of $n$ squaring maps on the circle.
\end{lemma}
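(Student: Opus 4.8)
The plan is to induct on $n$, unwinding the recursive definition of the coverings $p_k$ and using the elementary fact that a simultaneous pullback of a product of maps may be formed one factor at a time. Write $e_j:=\mathrm{pr}_j\circ i\colon H_0\to S^1$ for the $j$-th coordinate function of the embedding $i$; concretely $e_j$ is the chosen homeomorphism $C_j\cong S^1$ on the circle $C_j$ and is constant at the basepoint $1$ on every other circle (consistently, since $C_j\cap C_k=\{O\}$ for $j\neq k$ and $O\mapsto 1$). Put $e^{(n)}:=(e_1,\dots,e_n)=\mathrm{pr}_n\circ i\colon H_0\to(S^1)^n$ and let $P_n\to H_0$ be the pullback of $2^n\colon(S^1)^n\to(S^1)^n$ along $e^{(n)}$, so that explicitly
$$P_n=\bigl\{(x,w_1,\dots,w_n)\in H_0\times(S^1)^n \ \bigm|\ w_j^2=e_j(x)\ \text{for all}\ j\le n\bigr\}$$
with projection to $H_0$ as structure map. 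The goal is to identify $q_n\colon H_n\to H_0$ with $P_n\to H_0$ over $H_0$; for $n=0$ the diagram degenerates and there is nothing to prove, which anchors the induction.

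For the inductive step I would assume $q_{n-1}\colon H_{n-1}\to H_0$ has been identified with $P_{n-1}\to H_0$ and examine the construction of $H_n=\widetilde H_{n-1}$. Since $e_j\equiv 1$ on $C_n$ for every $j<n$, the explicit form of $P_{n-1}$ shows that $q_{n-1}^{-1}(C_n)$ is the trivial $2^{n-1}$-sheeted cover of $C_n$, i.e.\ a disjoint union of $2^{n-1}$ circles each mapped homeomorphically onto $C_n$, and the $2^{n-1}$ points of $F_{n-1}:=q_{n-1}^{-1}(O)$ distribute one per circle. Thus $H_{n-1}$ is presented exactly in the form required by the construction: it is obtained from the path-connected space $A:=\overline{H_{n-1}\setminus q_{n-1}^{-1}(C_n)}$ by attaching the circles $F_{n-1}\times S^1$ at the points of $F_{n-1}$ (one verifies in passing that each $H_n$, hence also $A$, is path-connected, so the hypotheses of the construction hold).

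The key step is to recognize $p_n\colon H_n=\widetilde H_{n-1}\to H_{n-1}$ as a single pullback square, namely the pullback of the squaring map $2\colon S^1\to S^1$ along $e_n\circ q_{n-1}\colon H_{n-1}\to S^1$. This I would check piecewise: on $A$ the map $e_n\circ q_{n-1}$ is constant at $1$ (because $q_{n-1}(A)$ avoids $C_n$ except for $O$), so pulling back $2$ yields the trivial double cover $A\times\{-1,1\}$; on each petal circle of $q_{n-1}^{-1}(C_n)$ the map $e_n\circ q_{n-1}$ is a homeomorphism onto $S^1$, so pulling back $2$ yields a copy of $S^1\xrightarrow{\,z\mapsto z^2\,}S^1$; and over each attaching point $f\in F_{n-1}$ the two local pictures agree because both fibres equal $\{w:w^2=1\}=\{-1,1\}$. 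Hence the pullback is $A\times\{-1,1\}$ with $2^{n-1}$ double-covering circles glued on along $F_{n-1}\times\{-1,1\}$, which is precisely the subspace $A\times\{-1,1\}\cup F_{n-1}\times S^1$ defining $\widetilde H_{n-1}$, and the pullback projection is $p_n$.

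Finally I would assemble the two squares. Since a pullback of $2^n=2^{n-1}\times 2$ along $e^{(n)}=(e^{(n-1)},e_n)$ can be formed by first pulling back $2^{n-1}$ along $e^{(n-1)}$ and then pulling back $2$ along the composite of the resulting projection with $e_n$, the inductive hypothesis (the first pullback is $q_{n-1}\colon H_{n-1}\to H_0$) together with the previous paragraph (the second pullback is $p_n\colon H_n\to H_{n-1}$) give that $q_n=q_{n-1}\circ p_n\colon H_n\to H_0$ is the pullback of $2^n$ along $e^{(n)}=\mathrm{pr}_n\circ i$; concretely a point of $H_n$ becomes a tuple $(x,w_1,\dots,w_n)$ with $w_j^2=e_j(x)$ for all $j\le n$ and $q_n$ becomes the projection to $H_0$, which is exactly $P_n$. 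This closes the induction and produces the asserted diagram. The hard part is the piecewise verification above: matching the ad hoc subspace description of $\widetilde H_{n-1}$ inside $A\times S^1$ with the categorical pullback, and in particular getting the gluing over the points of $F_{n-1}$ right; everything else is bookkeeping with the recursion and the pasting of pullback squares.
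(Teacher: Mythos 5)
Your proof is correct, and it reaches the paper's conclusion by a route that is close in spirit but organized differently. The paper computes the pullback globally in coordinates, exhibiting its total space as an explicit union $\bigcup_{j}H_{n,j}$ of subsets of $(S^1)^{\N}$, and then proves by induction that this union is $H_n$ by matching the decomposition against the recursive attachment construction. You instead factor the pullback of $2^n=2^{n-1}\times 2$ as an iterated pullback and isolate the key point that each single step $p_n\colon \widetilde H_{n-1}\to H_{n-1}$ is itself the pullback of the squaring map $2\colon S^1\to S^1$ along $e_n\circ q_{n-1}$, verified piecewise over $A$ and over the $2^{n-1}$ petal circles of $q_{n-1}^{-1}(C_n)$. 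Your organization makes the role of the squaring construction more transparent and replaces coordinate bookkeeping by the pasting lemma for pullback squares; the paper's version has the compensating advantage of producing the explicit coordinate description of $H_n$ that is then used directly to compute $H_\infty$ in the following proposition. Two small points to tidy: the phrase ``each $H_n$, hence also $A$, is path-connected'' is not an implication (subspaces of path-connected spaces need not be path-connected), though path-connectedness of $A=q_{n-1}^{-1}\big(\bigcup_{k\ne n}C_k\big)$ is easily checked directly since the points of $F_{n-1}$ are joined pairwise through the doubled circles over $C_1,\dots,C_{n-1}$; and the identification of the double cover of each petal with $z\mapsto z^2$ requires choosing the parametrization of the petals compatibly with $e_n\circ q_{n-1}$ (or invoking uniqueness of the connected double cover of $S^1$ together with the freedom to compose with the deck transformation so as to match the fibres over the attaching points).
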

\begin{proof}
By a direct computation, the total space of the pull-back of the covering projection $2^n\colon (S^1)^n\to (S^1)^n$ along $\mathrm{pr_n}\circ i$
can be decomposed as the union $\bigcup_{j=1}^\infty H_{n,j}$, where 
$$H_{n,j}=\big\{(x_i)\in (S^1)^\N\big|\, x_j\in S^1,\, \text{and if}\ i\ne j, x_i=1\, \text{for}\ i>n,\, \text{and}\ x_i=\pm 1\ \text{for}\ i\le n\big\}.$$
We use induction to prove that $H_n=\bigcup_{j=1}^\infty H_{n,j}$. Clearly, the Hawaiian earring can  be written as $H_0=\bigcup_{j=1}^\infty H_{0,j}$, so let us
assume that $H_{n-1}=\bigcup_{j=1}^\infty H_{n-1,j}$. Note that we can decompose $H_n$ as 
$$H_n=\bigcup_{j\ne n}H_{n-1,j}\cup \bigcup_{j\ne n}H'_{n-1,j}\cup H_{n-1,n},$$ 
where $H'_{n-1,j}$ is defined in the same way
as $H_{n-1,j}$ only with $x_n=-1$ instead of $x_n=1$, i.e.
$$H'_{n-1,j}=\big\{(x_i)\in (S^1)^\infty\big|\, x_j\in S^1,\, \text{and if}\ i\ne j,\hspace{50mm}$$ $$\hspace*{40mm} x_i=1\, \text{for}\ i>n,\, x_n=-1, \, \text{and}\ x_i=\pm 1\ \text{for}\ i\le n-1\big\}.$$
But that is exactly how $H_n$ was constructed from $H_{n-1}$ in Section \ref{sec:squaring}, by taking $A=\bigcup_{j\ne n}H_{n-1,j}$ and $F$ the $2^{n-1}$ points with coordinates $\pm 1$ in $A$.
\end{proof}

Ii is now easy to compute the inverse limit of the spaces $H_n$:

\begin{proposition}
The total space of the squaring lifting space is given by
$$H_\infty=\big\{(x_i)\in(S^1)^\N  \big|\ x_i=\pm 1 \,\text{for all but one index}\ i   \big\},$$ 
and the following pull-back diagram determines the lifting projection $p_\infty\colon H_\infty\to H_0$.
$$\xymatrix{
H_\infty \ar[r] \ar[d]_{p_\infty}\pullbackcorner &  (S^1)^\N\ar[d]^{2^\N} \\
H_0 \ar@{^(->}[r]_-i & (S^1)^\N }$$
\end{proposition}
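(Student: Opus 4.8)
The plan is to use that inverse limits commute with pull-backs, so that the inverse limit of the tower of pull-back squares produced by the previous Lemma is again a single pull-back square.

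First I would identify the bonding maps. Unwinding the construction of Section \ref{sec:squaring} (as already done in the proof of the Lemma), the covering $p_n\colon H_n\to H_{n-1}$ squares the $n$-th circle-coordinate and leaves all the others fixed; hence $q_n=p_n\circ\cdots\circ p_1$ squares the first $n$ coordinates, in agreement with the pull-back description of $H_n$. Under the identifications of the Lemma, $p_n$ is therefore precisely the map induced on pull-backs by the morphism of cospans
$$\xymatrix{
H_0 \ar[r]^-{\mathrm{pr}_n\circ i}\ar@{=}[d] & (S^1)^n \ar[d]^{\mathrm{pr}} & (S^1)^n \ar[l]_-{2^n}\ar[d]^{\mathrm{pr}}\\
H_0 \ar[r]_-{\mathrm{pr}_{n-1}\circ i} & (S^1)^{n-1} & (S^1)^{n-1} \ar[l]^-{2^{n-1}}}$$
where $\mathrm{pr}$ forgets the last coordinate. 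Commutativity of the two squares is immediate: the left one because the projections $(S^1)^\N\to(S^1)^n$ are compatible, the right one because the squaring maps act coordinatewise. The one point that deserves a short verification is that the map induced on pull-backs really agrees with $p_n$ on the subspace $\bigcup_j H_{n,j}\subset(S^1)^\N$; this is a direct calculation, using that a point of the pull-back already satisfies $x_n=y_n^2$, so that forgetting the index-$n$ circle corresponds to squaring that coordinate.

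Then I would pass to the limit over $n$. The towers of middle and of right terms $(S^1)^n$, with the coordinate-forgetting projections, both have inverse limit $(S^1)^\N$; the maps $2^n$ assemble into the coordinatewise squaring map $2^\N$; and the maps $\mathrm{pr}_n\circ i$ assemble into the inclusion $i\colon H_0\hookrightarrow(S^1)^\N$. Since limits commute with limits, $H_\infty$ is the pull-back of $i$ along $2^\N$, which is exactly the square in the statement. Projecting that pull-back onto its $(S^1)^\N$-factor identifies $H_\infty$ with $\{(y_i)\in(S^1)^\N\mid(y_i^2)\in i(H_0)\}$ and $p_\infty$ with $(y_i)\mapsto(y_i^2)$. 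Finally, $i(H_0)=\bigcup_n i(C_n)$ is precisely the set of sequences in $(S^1)^\N$ with at most one coordinate different from $1$, so $(y_i^2)\in i(H_0)$ means exactly that $y_i=\pm1$ for all but one index $i$, which is the asserted description of $H_\infty$. (The same conclusion can be reached concretely: a compatible sequence $(h^{(n)})$ in $\lim_{\longleftarrow}H_n$ has $h^{(n)}=(y_1,\dots,y_n,y_{n+1}^2,y_{n+2}^2,\dots)$ for a well-defined $(y_i)\in(S^1)^\N$, because $p_n$ only touches the $n$-th coordinate.)

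The main obstacle is entirely bookkeeping: checking that the bonding maps of the tower $\{H_n\}$ are correctly matched with the coordinate-forgetting projections of the $(S^1)^n$-towers, so that all the pull-back squares fit into one diagram whose limit may be computed in either order. Once that matching is in hand, the identification of the limit and the explicit formula for $H_\infty$ follow formally.
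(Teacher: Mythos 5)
Your proof is correct and is exactly the computation the paper has in mind: the paper omits the argument entirely (it says only ``It is now easy to compute the inverse limit of the spaces $H_n$''), and your route --- identifying the bonding maps $p_n$ as coordinate-squaring maps compatible with the coordinate-forgetting projections of the $(S^1)^n$-towers, invoking the commutation of inverse limits with pull-backs, and then reading off the explicit description of the fibre product of $i$ and $2^{\N}$ --- supplies precisely the missing details. No gaps.
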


We can visualize the space $H_\infty$ as a subspace of the Hilbert cube $I^\N$ obtained by doubling all edges in the 1-skeleton of $I^\N$  
(in a sense, since the Hilbert cube is not a CW-complex). Evidently, $H_\infty$ is path-connected.  It is also locally 
path-connected, because every point of $H_\infty$ admits arbitrarily small neighbourhoods of the form 
$(U\times I^\N)\cap H_\infty$ where $U\subset I^n$ and $U\cap H_\infty$ is path-connected. We have thus proved

\begin{theorem}
\label{thm:HE lifting}
The total space of the squaring lifting space over the Hawaiian earring is path-connected and locally path-connected
(in fact, it is a Peano continuum). 
\end{theorem}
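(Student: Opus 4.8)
The plan is to use the explicit description of $H_\infty$ as a subspace of $(S^1)^\N$ furnished by the preceding Proposition, namely
$$H_\infty=\big\{(x_i)\in(S^1)^\N \big|\ x_i=\pm 1 \ \text{for all but one index}\ i\big\},$$
and to establish path-connectedness and local path-connectedness directly from this formula, then invoke the Hahn--Mazurkiewicz theorem to conclude that $H_\infty$ is a Peano continuum.

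First I would prove path-connectedness. Write $b:=(1,1,1,\dots)\in H_\infty$ for the basepoint. Given an arbitrary point $x=(x_i)\in H_\infty$, say with $x_i=\pm1$ for $i\ne k$ and $x_k\in S^1$ arbitrary, I would connect $x$ to $b$ by a finite concatenation of paths, each of which moves a single coordinate at a time while holding the others fixed at $\pm1$: a path that sweeps $x_k$ along $S^1$ to $1$, followed, for each index $j\le\max(k,\dots)$ with $x_j=-1$, by a path that rotates the $j$-th coordinate from $-1$ to $1$ through the circle $C_j$. At every instant exactly one coordinate is off $\{\pm1\}$, so the path stays inside $H_\infty$; only finitely many coordinates differ from those of $b$, so the concatenation is finite. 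This shows $H_\infty$ is path-connected.

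Next I would prove local path-connectedness, which is the real point since it is what fails for the solenoid. Fix $x\in H_\infty$ and a basic open neighbourhood; it has the form $V=(U\times(S^1)^{\{i>n\}})\cap H_\infty$ for some $n$ and some open $U\subset(S^1)^n$ containing $(x_1,\dots,x_n)$. I would argue that, shrinking $U$ if necessary, the trace $V$ is itself path-connected by the same coordinate-by-coordinate sweeping argument, performed now inside the neighbourhood: any point of $V$ can be joined to $x$ by first moving the single ``free'' coordinate and then correcting the finitely many $\pm1$ coordinates that lie in the range $>n$ (these are unconstrained by $U$) and the ones in the range $\le n$ (using that $U$, chosen as a product of small arcs, remains path-connected under such moves). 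Hence $x$ has arbitrarily small path-connected neighbourhoods, so $H_\infty$ is locally path-connected.

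I expect the main obstacle to be the bookkeeping in the local argument: one must choose the neighbourhood $U$ carefully (e.g. as a product of arcs, or at least path-connected and ``$\pm1$-stable'' in a suitable sense) so that the correcting sweeps of the $\pm1$ coordinates with index $\le n$ do not leave $U$, while simultaneously ensuring the free coordinate can be brought home without leaving $V$. This requires noting that at a point $x\in H_\infty$ the coordinates with index $\le n$ that equal $\pm1$ admit small arc-neighbourhoods in $S^1$ through which one can rotate, i.e.\ one uses the circle structure, not merely the product topology. Once path-connectedness and local path-connectedness are in hand, the space is a continuum (it is a closed subset of the compact metrizable $(S^1)^\N$, hence compact and metrizable, and connected since path-connected), so by Hahn--Mazurkiewicz it is a Peano continuum, completing the proof.
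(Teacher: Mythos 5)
Your overall strategy --- read off everything from the explicit description of $H_\infty$ inside $(S^1)^\N$ and move one coordinate at a time --- is the approach the paper itself takes, but both of your main steps rest on a finiteness assumption that is false, and in the second step the error is not repairable by bookkeeping. For path-connectedness: a point of $H_\infty$ may have \emph{infinitely} many coordinates equal to $-1$ (for instance $(-1,-1,-1,\dots)$ belongs to $H_\infty$, since the description only requires all but at most one coordinate to lie in $\{\pm1\}$, not to equal $1$). So your assertion that ``only finitely many coordinates differ from those of $b$'' fails, and a finite concatenation of sweeps does not reach the basepoint. The conclusion is still true, but it needs an extra idea: perform the countably many sweeps on a shrinking sequence of intervals (the $j$-th sweep on $[1-2^{-j+1},\,1-2^{-j}]$, say); since continuity into a product is checked coordinatewise and each coordinate is eventually constant, this infinite concatenation is a genuine path in $H_\infty$, and at each instant at most one coordinate is off $\{\pm1\}$.

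For local path-connectedness the same finiteness error reappears (``the finitely many $\pm1$ coordinates that lie in the range $>n$''), and here the sweeping strategy meets a structural obstruction rather than a bookkeeping issue. Suppose $x\in H_\infty$ has its free coordinate $x_k\notin\{\pm1\}$, take $n\ge k$, and let $U$ be a product of small arcs whose $k$-th factor avoids $\pm1$. Then every $z\in V=(U\times(S^1)^{\{i>n\}})\cap H_\infty$ has $z_k\notin\{\pm1\}$, hence $z_i\in\{\pm1\}$ for every $i\ne k$; consequently along any path in $V$ all coordinates other than the $k$-th are continuous maps into $\{\pm1\}$ and therefore constant, so $V$ is homeomorphic to an arc times a Cantor set and is very far from path-connected. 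In particular the point obtained from $x$ by flipping one coordinate $m>n$ from $1$ to $-1$ lies in $V$ for every choice of $U$, yet any path joining it to $x$ in $H_\infty$ must first drive the $k$-th coordinate all the way to $\pm1$ and back, hence must leave $U$. Your proposed ``correcting sweeps'' of the $\pm1$ coordinates tacitly assume the free coordinate can stay put off $\{\pm1\}$ while another coordinate moves, which violates membership in $H_\infty$. So no choice of $U$, however careful, makes these neighbourhoods path-connected at such points $x$; your argument establishes local path-connectedness only at the points of the fibre $\{\pm1\}^{\N}$, and the ``main obstacle'' you defer to bookkeeping is exactly where the proof cannot be completed as proposed. (A minor point: the appeal to Hahn--Mazurkiewicz is superfluous, since a compact, connected, locally connected metric space is a Peano continuum by definition.)
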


The properties of the squaring lifting space are in certain sense transversal to those of the solenoids. For instance,
the lifting projection $p\colon \Sol_2\to S^1$ has a path-connected and semi-locally simply connected base, and is not 
a covering projection, because $\Sol_2$ is not path-connected nor locally path connected (compare comments in the Introduction).
On the other side, $p_\infty\colon H_\infty\to H_0$ has a path-connected and locally path-connected total space but is not 
a covering projection, because its base space is not semi-locally simply connected. Some further properties of the squaring
lifting space are collected in the following corollary. Recall that the fundamental group of the Hawaiian earring can be 
represented as the subgroup of the inverse limit of free groups consisting of the so-called \emph{legal words}, that is 
infinite words in the alphabet $a_1,a_2,a_3,\ldots$ where each letter appears finitely many times (see \cite{Cannon-Conner}
for more details). 

\begin{corollary}
\begin{enumerate}\item[]
\item The group of deck transformations of the squaring lifting space is given by $\Aut(p_\infty)\cong\ZZ_2^{\NN}$. 
\item The lifting of paths in the squaring lifting space induces the epimorphism $\partial\colon \pi_1(H_0)\to\ZZ_2^\N.$ 
\item The fundamental group of the total space $\pi_1(H_\infty)$ is isomorphic to $\Ker\partial$, which is the subgroup of 
$\pi_1(H_0)$ whose elements are legal words in which every letter appears an even number of times (i.e. represented by loops 
that go around each circle in the Hawaiian earring an even number of times).
\item Let $p\colon L\to H_0$ be any lifting space. There exists a map $f\colon H_0\to L$ such that $p_\infty=p\circ f$ if, and only 
if $\Ker{\partial}\subseteq p_\sharp(\pi_1(L))$.
\end{enumerate}
\end{corollary}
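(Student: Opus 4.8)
The plan is to deduce all four items from a single homomorphism, so I would establish \textbf{(2)} first. Write $O$ for the wedge point of $H_0$, which the inclusion $i$ sends to $(1,1,\dots)\in(S^1)^\N$; the formula for $H_\infty$ then gives $F:=p_\infty^{-1}(O)=\{\pm1\}^\N$, and I identify $F$ with $\ZZ_2^\N$ as a topological group via $(-1)^{v_k}\leftrightarrow(v_k)$, with $O$ as basepoint. For a loop $\gamma$ at $O$ in $H_0$ I use unique path lifting to obtain its unique lift $\widetilde\gamma$ in $H_\infty$ starting at $O$ and set $\partial[\gamma]:=\widetilde\gamma(1)\in F$. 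Since $p_\infty$ is coordinatewise the squaring map $2^\N$ restricted to $H_\infty$, the $k$-th coordinate of $\widetilde\gamma$ is the lift of $\pr_k\circ i\circ\gamma$ through $z\mapsto z^2$ starting at $1$, which ends at $(-1)^{\deg_k\gamma}$, where $\deg_k\gamma$ is the degree of $\pr_k\circ i\circ\gamma$; hence $\partial[\gamma]=\big((-1)^{\deg_k\gamma}\big)_k$. Two things then need checking: that this coordinatewise lift genuinely stays inside $H_\infty$ — it does, because at each instant $\gamma$ lies on a single circle $C_j$, so every coordinate $k\ne j$ of $i\gamma$ is momentarily equal to $1$ and its lift momentarily equal to $\pm1$, leaving only the $j$-th coordinate of $\widetilde\gamma$ possibly off $\{\pm1\}$ — and that $\partial$ is a homomorphism, which is clear because $\deg_k$ is homotopy-invariant and additive under concatenation. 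Surjectivity onto the full product I would get from the infinite multiplication available in $\pi_1(H_0)$: for $S\subseteq\N$ the word $w_S=\prod_{k\in S}a_k$ is a legal word (each letter appears once), and $\partial[w_S]$ is the indicator function of $S$.

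For \textbf{(1)} I would use the Lemma identifying $H_n$ as the pull-back of $2^n\colon(S^1)^n\to(S^1)^n$: the latter is a regular covering with deck group $(\ZZ/2)^n$ acting coordinatewise, so $q_n$ is regular with $\Aut(q_n)\cong(\ZZ/2)^n$, and — being induced by $(S^1)^{n+1}\to(S^1)^n$ — the bonding maps $\Aut(q_{n+1})\to\Aut(q_n)$ are the coordinate projections. Since $p_\infty=\lim_{\longleftarrow}q_n$ is an inverse limit of regular coverings, \cite[Section~4]{Conner-Pavesic} gives $\Aut(p_\infty)\cong\lim_{\longleftarrow}\Aut(q_n)=\lim_{\longleftarrow}(\ZZ/2)^n=\ZZ_2^\N$ (equivalently $\cong\pi_1(H_0)/\Ker\partial\cong\mathrm{im}\,\partial$, using (3) and the surjectivity of $\partial$). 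For \textbf{(3)}, $p_\infty$ is a Hurewicz fibration, so I use the homotopy exact sequence
$$\xymatrix{1=\pi_1(F)\ar[r] & \pi_1(H_\infty)\ar[r]^{(p_\infty)_\sharp} & \pi_1(H_0)\ar[r]^{\delta} & \pi_0(F)}$$
Here $\pi_1(F)$ is trivial since $F$ is totally path-disconnected, so $(p_\infty)_\sharp$ is injective; and since every path component of $F$ is a point, $\delta$ coincides with $\partial$ under the identification $\pi_0(F)=F$. Exactness at $\pi_1(H_0)$ then gives $(p_\infty)_\sharp\pi_1(H_\infty)=\Ker\partial$, hence $\pi_1(H_\infty)\cong\Ker\partial$; and by the formula in (2), $[\gamma]\in\Ker\partial$ precisely when every $\deg_k\gamma$ is even, i.e.\ the legal word representing $[\gamma]$ has each letter $a_k$ occurring with even total exponent ($\gamma$ winds around each $C_k$ an even number of times), which is the asserted subgroup.

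For \textbf{(4)} I would invoke the lifting theorem for lifting spaces (\cite[Chapter~II]{Spanier}, \cite{Conner-Pavesic}): because $H_\infty$ is path-connected and locally path-connected by Theorem \ref{thm:HE lifting}, a lift $f\colon H_\infty\to L$ with $p\circ f=p_\infty$ exists if and only if $(p_\infty)_\sharp\pi_1(H_\infty)\subseteq p_\sharp\pi_1(L)$ inside $\pi_1(H_0)$, the usual basepoint-conjugacy ambiguity being vacuous since $\Ker\partial$ is normal; by (3) the left-hand side is $\Ker\partial$, which is the claim.

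The step I expect to be the main obstacle is \textbf{(2)}: verifying that the coordinatewise lift of a loop of $H_0$ really stays inside $H_\infty$, and above all that $\partial$ surjects onto the \emph{full} countable product $\ZZ_2^\N$ — this surjectivity genuinely uses the infinite multiplication in the fundamental group of the Hawaiian earring (legal words), and it is exactly this feature that separates $H_\infty$ from a solenoid, whose analogous boundary map would land in $\widehat\ZZ_2$ rather than $\ZZ_2^\N$. In \textbf{(4)} the only delicate point is citing the form of the lifting theorem valid for non-covering lifting spaces, whose hypotheses are met here precisely because $H_\infty$ is a Peano continuum.
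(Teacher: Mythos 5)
Your proposal is correct, and for items (1), (3) and (4) it follows the same route as the paper: (1) via the inverse limit of the deck groups $(\ZZ/2)^n$ of the finite stages, (3) via exactness of the homotopy sequence of the fibration $p_\infty$ at $\pi_1(H_0)$, and (4) via Spanier's lifting criterion, which applies because $H_\infty$ is path-connected and locally path-connected (you also silently correct the obvious typo in the statement: the lift is $f\colon H_\infty\to L$, not $f\colon H_0\to L$). The one genuine divergence is in (2). The paper obtains surjectivity of $\partial$ from the tail of the exact sequence $\pi_1(H_0)\to\ZZ_2^{\N}\to\pi_0(H_\infty)$, using the already-proved fact (Theorem \ref{thm:HE lifting}) that $H_\infty$ is path-connected, so $\pi_0(H_\infty)$ is trivial. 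You instead compute $\partial$ explicitly as $[\gamma]\mapsto\big((-1)^{\deg_k\gamma}\big)_k$ and hit every element of $\{\pm1\}^{\N}$ with the legal words $w_S=\prod_{k\in S}a_k$. Your argument is more self-contained (it does not lean on the path-connectedness of $H_\infty$, and in fact reproves it, since an epimorphic $\partial$ forces $\pi_0(H_\infty)=\ast$) and it makes the identification of $\Ker\partial$ with the even legal words in (3) completely concrete, where the paper leaves that description implicit; the paper's version is shorter given that Theorem \ref{thm:HE lifting} is already on the table. Your check that the coordinatewise lift stays in $H_\infty$ (at each instant at most one coordinate of $i\gamma$ differs from $1$, so at most one coordinate of the lift leaves $\{\pm1\}$) is exactly the point that needs saying and is correct.
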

\begin{proof}
The lifting projection $p_\infty$ is the inverse limit of a sequence of regular coverings, therefore by \cite[Corollary 4.11]{Conner-Pavesic} $\Aut(p)$ is 
isomorphic to the inverse limit of the sequence of respective fibres, therefore $\Aut(p_\infty)\cong\ZZ_2^\N$. 

The lifting projection $p_\infty\colon H_\infty\to H_0$ is a fibration, so we may consider the relevant portion of the homotopy exact
sequence (cf. \cite[Theorem 4.5]{Conner-Pavesic}):
$$\xymatrix{
1 \ar[r] & \pi_1(H_\infty) \ar[r]^{\pi_1(p_\infty)} & \pi_1(H_0) \ar[r]^-\partial & \ZZ_2^{\N} \ar[r] & \pi_0(H_\infty) }$$
By exactness, $\pi_1(H_\infty)$ is isomorphic to $\Ker\partial$, which in turn clearly consists of legal words 
in $\pi_1(H_0)$ in which every letter appears an even number of times. Moreover, as $H_\infty$ is path-connected, $\pi_0(H_\infty)$ is 
trivial, hence $\partial$ is an epimorphism. Finally, since $H_\infty$ is path-connected and locally path-connected, the lifting 
criterion of \cite[Theorem II.4.5]{Spanier} implies (4).
\end{proof}

In conclusion, one should not view the examples described in this paper just as a bestiary of strange lifting spaces, but rather as 
a list of main obstacles and caveats for a viable classification theory of lifting spaces. We believe that indecomposable lifting spaces with 
profinite fibre and transitive group of deck transformations are the correct extension of the concept of (finite) covering spaces.

\end{document}